\theoremstyle{plain}
\newtheorem{thm}{Theorem}[section]
\newtheorem{lem}[thm]{Lemma}
\theoremstyle{definition}
\newtheorem{rem}[thm]{Remark}
\newtheorem{dfns-rems}[thm]{Definitions and Remarks}
\newtheorem{notas-rems}[thm]{Notations and Remarks}
\newtheorem{exmps-rems}[thm]{Examples and Remarks}
\begin{document}

% ------------------------------------------------------------------------

\title[Increasing normalized depth function]{An Increasing normalized depth function}

% ------------------------------------------------------------------------

\author[S. A. Seyed Fakhari]{S. A. Seyed Fakhari}

\address{S. A. Seyed Fakhari, Departamento de Matem\'aticas\\Universidad de los Andes\\Bogot\'a\\Colombia.}

\email{s.seyedfakhari@uniandes.edu.co}
% ------------------------------------------------------------------------

\begin{abstract}
Let $\mathbb{K}$ be a field and $S=\mathbb{K}[x_1,\ldots,x_n]$ be the polynomial ring in $n$ variables over $\mathbb{K}$. Assume that $I$ is a squarefree monomial ideal of $S$. For every integer $k\geq 1$, we denote the $k$-th squarefree power of $I$ by $I^{[k]}$. The normalized depth function of $I$ is defined as $g_I(k)={\rm depth}(S/I^{[k]})-(d_k-1)$, where $d_k$ denotes the minimum degree of monomials belonging to $I^{[k]}$. Erey, Herzog, Hibi and Saeedi Madani conjectured that for any squarefree monomial ideal $I$, the function $g_I(k)$ is nonincreasing. In this short note, we provide a counterexample for this conjecture. Our example in fact shows that $g_I(2)-g_I(1)$ can be arbitrarily large.
\end{abstract}

% ------------------------------------------------------------------------

\subjclass[2020]{Primary: 13C15, 05E40}

% ------------------------------------------------------------------------

\keywords{Squarefree power, Normalized depth function}

% ------------------------------------------------------------------------

\maketitle

%%%%%%%%%%%%%%%%%%%%%%%%%%%%%%%%%%%%%%%%%%%%%%%%%%%%%%%%%%%%%%%%%%%%%%%%%%

\section{Introduction} \label{sec1}

Let $\mathbb{K}$ be a field and $S=\mathbb{K}[x_1,\ldots,x_n]$ be the
polynomial ring in $n$ variables over $\mathbb{K}$. For any squarefree monomial ideal $I\subset S$ and for any positive integer $k$, the $k$-th squarefree power of $I$ denoted by $I^{[k]}$ is the ideal generated by the squarefree monomials belonging to $I^k$. In \cite{ehhs}, Erey, Herzog, Hibi and Saeedi Madani studied the depth of squarefree powers. They introduced the notion of normalized depth function as follows. Let $\nu(I)$ be the largest integer $k$ with $I^{[k]}\neq 0$. For each integer $k=1,2, \ldots, \nu(I)$, we denote the minimum degree of monomials belonging to $I^{[k]}$ by $d_k$. The normalized depth function of $I$ is the function $g_I:\{1, 2, \ldots, \nu(I)\}\rightarrow\mathbb{Z}_{\geq 0}$ defined by$$g_I(k)={\rm depth}(S/I^{[k]})-(d_k-1).$$ The same authors conjectured that for any squarefree monomial ideal $I$, the function $g_I(k)$ is nonincreasing. This conjecture is known to be true in special cases (see e.g., \cite{cfl}, \cite{ehhs}, \cite{fhh}). However, in the next section, we provide a class of ideals disproving the conjecture. Our example indeed shows that the difference $g_I(2)-g_I(1)$ can be arbitrarily large.

%%%%%%%%%%%%%%%%%%%%%%%%%%%%%%%%%%%%%%%%%%%%%%%%%%%%%%%%%%%%%%%%%%%%%%%%%%

\section{An example} \label{sec2}

In Theorem \ref{main}, we introduce a class of ideals $I$ showing that the normalized depth function $g_I(k)$ is not necessarily nonincreasing.

We recall that for any graph $G$ with vertex set $V(G)=\{1, 2, \ldots, n\}$ and edge set $E(G)$, its edge ideal is defined as$$I(G)=(x_ix_j\mid \{i, j\}\in E(G))\subset S.$$Moreover, a graph $G$ is said to be sequentially Cohen-Macaulay over $\mathbb{K}$ if $S/I(G)$ is sequentially Cohen-Macaulay (one may look at \cite[Chapter III]{s} for the definition of sequentially Cohen-Macaulay modules). We say that $G$ is a sequentially Cohen-Macaulay graph if it is sequentially Cohen-Macaulay over any field $\mathbb{K}$. A subset $U$ of $V(G)$ is called an independent subset of $G$ if there are no edges among the vertices of $U$. We say that a subset $C\subseteq V(G)$ is a {\it minimal vertex cover} of $G$ if, first, every
edge of $G$ is incident with a vertex in $C$ and, second, there is no
proper subset of $C$ with the first property. Note that $C$ is a minimal
vertex cover if and only if $V(G)\setminus C$ is a maximal independent subset of $G$. Moreover, it is known by \cite[Lemma 9.1.4]{hh'} that every minimal prime ideal of $I(G)$ is of the form $(x_i\mid i\in C)$ where $C$ is a minimal vertex cover of $G$. Since $I(G)$ is a radical ideal, it follows that the irredundant
primary decomposition of $I(G)$ is given by$$I(G)=\bigcap (x_i\mid i\in C),$$where the intersection is taken over all minimal
vertex covers $C$ of $G$.

We first need the following simple lemma.

\begin{lem} \label{tree}
Let $T$ be a tree with $n$ vertices. Then ${\rm depth}(S/I(T))$ is equal to the minimum size of a maximal independent subset of $T$.
\end{lem}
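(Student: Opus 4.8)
The plan is to pass from $\operatorname{depth}$ to projective dimension via the Auslander--Buchsbaum formula and then to a purely combinatorial invariant. Since $S$ is a polynomial ring, $\operatorname{depth}(S/I(T)) = n - \operatorname{pd}(S/I(T))$, so it suffices to show that $\operatorname{pd}(S/I(T))$ equals $n$ minus the minimum size of a maximal independent subset of $T$. The key structural input is that $T$, being a forest, is a sequentially Cohen--Macaulay graph; I would invoke the well-known fact that chordal graphs, and in particular forests, are sequentially Cohen--Macaulay. For a squarefree monomial ideal $I$ with $S/I$ sequentially Cohen--Macaulay, the projective dimension is governed by the largest minimal prime, namely $\operatorname{pd}(S/I) = \operatorname{bight}(I)$, the big height of $I$ (see \cite{hh'} for the relevant homological machinery).

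Next I would translate $\operatorname{bight}(I(T))$ into graph-theoretic terms. By the discussion preceding the lemma, the minimal primes of $I(T)$ are exactly the ideals $(x_i \mid i \in C)$ as $C$ ranges over the minimal vertex covers of $T$, whence $\operatorname{bight}(I(T)) = \max\{|C| : C \text{ a minimal vertex cover of } T\}$. Complementation $C \mapsto V(T) \setminus C$ is a bijection between minimal vertex covers and maximal independent subsets, and it satisfies $|C| = n - |V(T) \setminus C|$. Therefore maximizing $|C|$ over minimal vertex covers is the same as minimizing $|U|$ over maximal independent subsets $U$, and so $\operatorname{bight}(I(T)) = n - m$, where $m$ denotes the minimum size of a maximal independent subset of $T$. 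Combining this with the previous paragraph gives $\operatorname{depth}(S/I(T)) = n - \operatorname{pd}(S/I(T)) = n - \operatorname{bight}(I(T)) = m$, which is the assertion.

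The only nonformal step, and the one I expect to be the crux, is the identity $\operatorname{pd}(S/I(T)) = \operatorname{bight}(I(T))$. The inequality $\operatorname{pd}(S/I(T)) \geq \operatorname{bight}(I(T))$ holds for every squarefree monomial ideal, since localizing at a minimal prime $P$ of maximal height makes $(S/I(T))_P$ Cohen--Macaulay of that codimension, while projective dimension cannot drop under localization. The reverse inequality is where sequential Cohen--Macaulayness is essential, and I would obtain it through Alexander duality: $S/I(T)$ is sequentially Cohen--Macaulay precisely when the cover ideal $I(T)^{\vee}$ is componentwise linear, Terai's formula gives $\operatorname{pd}(S/I(T)) = \operatorname{reg}(I(T)^{\vee})$, and the regularity of a componentwise linear ideal equals the top degree of its minimal generators, which here is exactly the largest size of a minimal vertex cover, i.e. $\operatorname{bight}(I(T))$. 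A self-contained alternative is induction on $n$: delete a leaf $v$ with neighbor $u$ and analyze the short exact sequence $0 \to S/(I(T):x_v) \to S/I(T) \to S/(I(T),x_v) \to 0$, in which $S/(I(T),x_v)$ recovers the smaller tree $T-v$ while $S/(I(T):x_v)$ splits off the free variable $x_v$ after setting $x_u = 0$; tracking how $m$ behaves under these two operations would close the induction. Given the tools already recalled, however, the duality argument is the shorter route.
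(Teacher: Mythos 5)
Your proof is correct and follows essentially the same route as the paper: trees are sequentially Cohen--Macaulay by \cite{fv}, hence $\operatorname{depth}(S/I(T))=n-\operatorname{bight}(I(T))$, and complementation between minimal vertex covers and maximal independent sets finishes the argument. The only difference is that where the paper simply cites \cite[Theorem 4]{f} (see also \cite[Corollary 3.33]{mv}) for the identity $\operatorname{pd}(S/I)=\operatorname{bight}(I)$ in the sequentially Cohen--Macaulay case, you supply its proof (localization for $\geq$, and Terai's formula together with the Herzog--Hibi characterization via componentwise linear Alexander duals for $\leq$), which is a valid unpacking of the cited black box rather than a different approach.
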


\begin{proof}
It is well-known that any tree is a sequentially Cohen-Macaulay graph (see e.g., \cite[Theorem 1.2]{fv}). Hence, it follows from \cite[Theorem 4]{f} (see also \cite[Corollary 3.33]{mv}) that ${\rm depth}(S/I(T))$ is equal to $n-h$, where $h$ denotes the maximum height of an associated prime of $I(T)$. Thus, using the primary decomposition of $I(T)$ given above, we deduce that $h$ is the maximum size of a minimal vertex cover of $T$. Therefore, $n-h$ is the minimum size of a maximal independent subset of $T$.
\end{proof}

We are now ready to present our example.

\begin{thm} \label{main}
Let $n\geq 6$ be an integer and consider the polynomial ring $S=\mathbb{K}[x_1,\ldots,x_n]$. For each integer $i$ with $1\leq i\leq n-4$, set $u_i:=x_1x_3x_{i+4}$. Also, set$$u_{n-3}:=x_1x_4x_5, \ \ \ \ u_{n-2}:=x_2x_3x_4 \ \ \ \ {\rm and} \ \ \ \ u_{n-1}:=x_2x_3x_6.$$Let $I$ be the squarefree monomial ideal generated by $u_1, u_2, \ldots, u_{n-1}$. Then
\begin{itemize}
\item[(i)] $g_I(1)=1$; and
\item[(ii)] $g_I(2)=n-6$.
\end{itemize}
In particular, $g_I(2)=g_I(1)+n-7$.
\end{thm}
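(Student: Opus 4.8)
The plan is to handle the two parts separately: part (ii) is a short direct computation once one identifies $I^{[2]}$, while part (i) is the delicate one and requires a short exact sequence together with Lemma \ref{tree}. Throughout I note that all $u_i$ have degree $3$, so $d_1=3$, and I will need to check $d_2=6$ along the way.

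For part (ii) I would first determine $I^{[2]}$ explicitly, using the standard description that the minimal generators of $I^{[2]}$ are the squarefree products $u_iu_j$ with $i\neq j$ and $\mathrm{supp}(u_i)\cap\mathrm{supp}(u_j)=\emptyset$. The first step is therefore to scan all pairs of generators and record which are supported on disjoint variable sets. Any two members of the first family share $\{x_1,x_3\}$; each $u_i$ shares $x_1$ with $u_{n-3}$ and $x_3$ with both $u_{n-2}$ and $u_{n-1}$; and among the last three generators one checks that $u_{n-3}=x_1x_4x_5$ and $u_{n-1}=x_2x_3x_6$ are the only disjoint pair, while $u_{n-3},u_{n-2}$ share $x_4$ and $u_{n-2},u_{n-1}$ share $\{x_2,x_3\}$. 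Hence the only admissible product is $u_{n-3}u_{n-1}=x_1x_2x_3x_4x_5x_6$, so $I^{[2]}=(x_1x_2x_3x_4x_5x_6)$ is principal. A principal ideal generated by a single (nonzero, hence nonzerodivisor) monomial has $\mathrm{depth}(S/I^{[2]})=n-1$, and plainly $d_2=6$; therefore $g_I(2)=(n-1)-(6-1)=n-6$.

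For part (i) I would compute $\mathrm{depth}(S/I)$ by splitting off the variable $x_3$ through the short exact sequence
$$0\longrightarrow S/(I:x_3)\xrightarrow{\ \cdot x_3\ } S/I\longrightarrow S/(I,x_3)\longrightarrow 0.$$
The key step, which I expect to require the most care, is the identification of the colon ideal: dividing by $x_3$ each generator it divides and discarding the now-redundant cubic $x_1x_4x_5$ (which is a multiple of $x_1x_5$), one finds $(I:x_3)=(x_1x_5,x_1x_6,\ldots,x_1x_n,x_2x_4,x_2x_6)$. This is exactly the edge ideal of a tree $T$ on the vertex set $\{1,2,4,5,\ldots,n\}$, namely a star at vertex $1$ with the path $4\!-\!2\!-\!6\!-\!1$ attached, while $x_3$ appears as an isolated vertex in $S$. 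By Lemma \ref{tree}, $\mathrm{depth}$ of the Stanley--Reisner ring on the vertices of $T$ equals the minimum size of a maximal independent set of $T$; since $\{1,2\}$ is independent and dominating (so maximal) and no single vertex is universal, this minimum is $2$, and the free variable $x_3$ raises it by one, giving $\mathrm{depth}(S/(I:x_3))=3$. On the other side, every generator except $x_1x_4x_5$ is divisible by $x_3$, so $(I,x_3)=(x_3,x_1x_4x_5)$ and $S/(I,x_3)$ is a polynomial ring in $n-1$ variables modulo one nonzerodivisor, whence $\mathrm{depth}(S/(I,x_3))=n-2$.

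Finally I would feed these two values into the depth lemma. Because the two outer depths $3$ and $n-2\ge 4$ are well separated for $n\ge 6$, the inequalities $\mathrm{depth}(S/I)\ge\min\{3,n-2\}=3$ and $3=\mathrm{depth}(S/(I:x_3))\ge\min\{\mathrm{depth}(S/I),\,n-1\}$ together force $\mathrm{depth}(S/I)=3$, so that $g_I(1)=3-(3-1)=1$. The \emph{in particular} clause is then immediate from (i) and (ii). The main obstacle is concentrated entirely in part (i), and specifically in the choice of $x_3$ as the splitting variable: it is precisely this choice that turns $(I:x_3)$ into the edge ideal of a \emph{tree} (making Lemma \ref{tree} applicable) while simultaneously keeping $S/(I,x_3)$ of large depth; once those two depths are in hand, the depth-lemma bookkeeping is routine.
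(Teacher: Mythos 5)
Your proposal is correct, and its skeleton coincides with the paper's: the same short exact sequence splitting off $x_3$, the same identification $(I:x_3)=(x_2x_4,x_2x_6)+(x_1x_5,\ldots,x_1x_n)$ as the edge ideal of a tree with $\{1,2\}$ a minimum-size maximal independent set (so $\mathrm{depth}(S/(I:x_3))=2+1=3$ via Lemma \ref{tree}), the same computation $(I,x_3)=(x_3,x_1x_4x_5)$ giving depth $n-2$, and the same observation that $I^{[2]}=(u_{n-3}u_{n-1})$ is principal. The one genuine divergence is how you obtain the upper bound $\mathrm{depth}(S/I)\leq 3$: the paper exhibits the minimal prime $\mathfrak{p}=(x_4,\ldots,x_n)$ of $I$ and uses $\mathrm{depth}(S/I)\leq\dim(S/\mathfrak{p})=3$, whereas you extract it from the second inequality of the depth lemma, $3=\mathrm{depth}(S/(I:x_3))\geq\min\{\mathrm{depth}(S/I),\,(n-2)+1\}$, which forces $\mathrm{depth}(S/I)\leq 3$ since $n-1>3$ for $n\geq 6$. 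Both are valid; your version has the aesthetic advantage of wringing everything out of the single exact sequence with no auxiliary prime-hunting, while the paper's minimal-prime bound is independent of the sequence and would survive even if the colon-ideal depth were harder to pin down exactly (an upper bound via an associated prime needs no lower-bound input). You also spell out the pairwise support-disjointness check behind $I^{[2]}=(x_1x_2x_3x_4x_5x_6)$, which the paper dismisses as obvious; that verification, and the confirmation that $d_2=6$, are worth keeping.
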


\begin{proof}
(i) One can easily see that $\mathfrak{p}=(x_4, \ldots, x_n)$ is a minimal prime ideal of $I$. Thus,
\[
\begin{array}{rl}
{\rm depth}(S/I)\leq {\rm dim}(S/\mathfrak{p})=3.
\end{array} \tag{1} \label{1}
\]
Consider the following short exact sequence.
\begin{align*}
& 0\longrightarrow \frac{S}{(I:x_3)}\longrightarrow \frac{S}{I}\longrightarrow \frac{S}{(I,x_3)}\longrightarrow 0
\end{align*}
It follows from depth lemma \cite[Proposition 1.2.9]{bh} that
\[
\begin{array}{rl}
{\rm depth}(S/I)\geq \min\big\{{\rm depth}(S/(I:x_3)), {\rm depth}(S/(I,x_3))\big\}.
\end{array} \tag{2} \label{2}
\]
Since $(I,x_3)=(u_{n-3},x_3)$, we have
\[
\begin{array}{rl}
{\rm depth}(S/(I,x_3))=n-2\geq 4.
\end{array} \tag{3} \label{3}
\]
On the other hand, notice that$$(I:x_3)=(x_2x_4, x_2x_6)+(x_1x_{i+4}\mid 1\leq i\leq n-4).$$In particular, there is a tree $T$ with vertex set $[n]\setminus\{3\}$ such that $(I:x_3)=I(T)$. It is easy to see that $\{1,2\}$ is a maximal independent set in $T$ of minimum size. Since $3$ is not a vertex of $T$, Lemma \ref{tree} implies that
\[
\begin{array}{rl}
{\rm depth}(S/(I:x_3))=2+1=3.
\end{array} \tag{4} \label{4}
\]
We conclude from inequalities (\ref{2}), (\ref{3}) and (\ref{4}) that ${\rm depth}(S/I)\geq 3$. This inequality together with inequality (\ref{1}) implies that ${\rm depth}(S/I)=3$. Equivalently, $g_I(1)=1$.

(ii) It is obvious that $I^{[2]}$ is the principal ideal generated by $u_{n-3}u_{n-1}$. Thus, ${\rm depth}(S/I^{[2]})=n-1$. In other words, $g_I(2)=n-6$.
\end{proof}

\begin{rem}
Note that for the ideal in Theorem \ref{main}, we have $\nu(I)$. Thus, Theorem \ref{main} shows that in general the function $g_I(k)$ can be an increasing function. However, we do not have any example of a graph $G$ for which the function $g_{I(G)}(k)$ is not nonincreasing. So, the conjecture posed in \cite{ehhs} might be true for edge ideals.
\end{rem}

%%%%%%%%%%%%%%%%%%%%%%%%%%%%%%%%%%%%%%%%%%%%%%%%%%%%%%%%%%%%%%%%%%%%%%%%%

\section*{Acknowledgment}

The author would like to thank the referee for a careful reading of the
paper and for valuable comments. This research is supported by the FAPA grant from the Universidad de los Andes.

%%%%%%%%%%%%%%%%%%%%%%%%%%%%%%%%%%%%%%%%%%%%%%%%%%%%%%%%%%%%%%%%%%%%%%%%%%

\section*{Declarations}

The author declares that there is no conflict of interest for this work.

%%%%%%%%%%%%%%%%%%%%%%%%%%%%%%%%%%%%%%%%%%%%%%%%%%%%%%%%%%%%%%%%%%%%%%%%%%

%%%%%%%%%%%%%%%%%%%%%%%%%%%%%%%%%%%%%%%%%%%%%%%%%%%%%%%%%%%%%%%%%%%%%%%%%%

\end{document}